\definecolor{donkergroen}{RGB}{46,148,0}
\definecolor{donkerrood}{RGB}{204,0,0}
\definecolor{blauw}{RGB}{61,158,255}
\definecolor{donkerblauw}{RGB}{0,0,255}
\definecolor{donkergroen}{RGB}{46,148,0}
\definecolor{donkerrood}{RGB}{204,0,0}
\newif\if@borderstar
\def\bordermatrix{\@ifnextchar*{%
\@borderstartrue\@bordermatrix@i}{\@borderstarfalse\@bordermatrix@i*}%
}
\def\@bordermatrix@i*{\@ifnextchar[{\@bordermatrix@ii}{\@bordermatrix@ii[()]}}
\def\@bordermatrix@ii[#1]#2{%
\begingroup
\m@th\@tempdima8.75\p@\setbox\z@\vbox{%
\def\cr{\crcr\noalign{\kern 2\p@\global\let\cr\endline }}%
\ialign {$##$\hfil\kern 2\p@\kern\@tempdima & \thinspace %
\hfil $##$\hfil && \quad\hfil $##$\hfil\crcr\omit\strut %
\hfil\crcr\noalign{\kern -\baselineskip}#2\crcr\omit %
\strut\cr}}%
\setbox\tw@\vbox{\unvcopy\z@\global\setbox\@ne\lastbox}%
\setbox\tw@\hbox{\unhbox\@ne\unskip\global\setbox\@ne\lastbox}%
\setbox\tw@\hbox{%
$\kern\wd\@ne\kern -\@tempdima\left\@firstoftwo#1%
\if@borderstar\kern2pt\else\kern -\wd\@ne\fi%
\global\setbox\@ne\vbox{\box\@ne\if@borderstar\else\kern 2\p@\fi}%
\vcenter{\if@borderstar\else\kern -\ht\@ne\fi%
\unvbox\z@\kern-\if@borderstar2\fi\baselineskip}%
\if@borderstar\kern-2\@tempdima\kern2\p@\else\,\fi\right\@secondoftwo#1 $%
}\null \;\vbox{\kern\ht\@ne\box\tw@}%
\endgroup
}
\newcommand\mynobreakpar{\par\nobreak\@afterheading} 
\newcommand{\N}{\mathbb{N}}
\newcommand{\Z}{\mathbb{Z}}
\newcommand{\C}{\mathbb{C}}
\newcommand{\R}{\mathbb{R}}
\newcommand{\CC}{\mathcal{C}}
\newtheorem{theorem}{Theorem}[section]
\newtheorem{lemma}[theorem]{Lemma}
\newtheorem{proposition}[theorem]{Proposition}
\theoremstyle{definition}
\newtheorem{defn}{Definition}[section]
\newtheorem*{examp*}{Example}
\theoremstyle{plain}
\newcounter{thm}[section]
\title{{\large \textbf{SEMIDEFINITE BOUNDS FOR MIXED BINARY/TERNARY CODES}}}
\author{{\normalsize Bart Litjens}\footnote{Korteweg-De Vries Institute for Mathematics, University of Amsterdam, Amsterdam, The Netherlands. The research leading to these results has received funding from the European Research Council under the European Union's Seventh Framework Programme (FP7/2007-2013) / ERC grant agreement n$^{\circ}$ 339109.}}
\date{\vspace{-5ex}}
\begin{document}
\maketitle

\noindent {\small \textbf{Abstract.} For nonnegative integers $n_2, n_3$ and $d$, let $N(n_2,n_3,d)$ denote the maximum cardinality of a code of length $n_2+n_3$, with $n_2$ binary coordinates and $n_3$ ternary coordinates (in this order) and with minimum distance at least $d$. For a nonnegative integer $k$, let $\CC_k$ denote the collection of codes of cardinality at most $k$. For $D \in \CC_k$, define $S(D) \coloneqq \{C \in \CC_k \mid D \subseteq C, |D| +2|C\setminus D| \leq k\}$. Then $N(n_2,n_3,d)$ is upper bounded by the maximum value of $\sum_{v \in [2]^{n_2}[3]^{n_3}}x(\{v\})$, where $x$ is a function $\CC_k \rightarrow \R$  such that $x(\emptyset) = 1$ and $x(C) = 0$ if $C$ has minimum distance less than $d$, and such that the $S(D)\times S(D)$ matrix $(x(C\cup C'))_{C,C' \in S(D)}$ is positive semidefinite for each $D \in \CC_k$. By exploiting symmetry, the semidefinite programming problem for the case $k=3$ is reduced using representation theory. It yields $135$ new upper bounds that are provided in tables.}\\

\noindent \textbf{Key words:} code, mixed binary/ternary code, upper bounds, semidefinite programming
\noindent \textbf{MSC 2010:} 94B65, 05E10, 90C22, 20C30

\section{Introduction}\label{section:Introduction}

Let $\Z_{+}$ be the set of nonnegative integers, and let $[n] = \{1,...,n\}$, for any $n \in \Z_{+}$. Let $n_2,n_3 \in \Z_{+}$ be fixed. Then a \textit{mixed binary/ternary code} is a subset of $[2]^{n_2}[3]^{n_3}$. Mixed codes are of interest because of their application to football pools, see for instance \cite{hamalainen95}. Whenever $[n]$ consists of the \textit{letters} of an \textit{alphabet} of a code, we take the letters mod $n$. Since all codes considered in this paper are mixed, i.e., both $n_2 > 0$ and $n_3 > 0$, we will speak of \textit{codes} from now on. An element of a code is called a \textit{codeword} or \textit{word}.\\
\indent Given two words $v,w \in [2]^{n_2}[3]^{n_3}$, the \textit{Hamming distance} $d_{H}(v,w)$ between $v$ and $w$ is the number of positions $i \in [n_2+n_3]$ for which $v_i \neq w_i$. The Hamming distance between a word $v$ and the all-zero word is called the \textit{weight} of $v$, denoted $w(v)$. For a code $C$, the \textit{minimum distance} of $C$ is equal to the minimum of $d_H(v,w)$, where we range over distinct $v,w \in C$. Note that with this definition, the empty code and codes of size one do not have a minimum distance. The maximum cardinality of a code with minimum distance at least $d$ is denoted by $N(n_2,n_3,d)$. We will define a hierarchy of upper bounds on $N(n_2,n_3,d)$ that sharpens the linear programming bound defined in \cite{brouwer98}.\\
\indent For $k \in \Z_{+}$, let $\CC_k$ denote the collection of codes of cardinality at most $k$. For $D \in \CC_k$, define $S(D) \coloneqq \{C \in \CC_k \mid D \subseteq C, |D| +2|C\setminus D| \leq k\}$. Note that $|C \cup C'| \leq k$, for $C, C' \in S(D)$. For each function $x: \CC_k \rightarrow \R$, and for each $D \in \CC_k$, define the $S(D) \times S(D)$ matrix $M_D(x) = (x(C \cup C'))_{C,C' \in S(D)}$. Then we define 
\begin{align}\label{align:upperbound}
&N_{k}(n_2,n_3,d) \coloneqq \max_{x} \hspace{-1ex} \sum_{v \in [2]^{n_2}[3]^{n_3}} \hspace{-1ex}x(\{v\}), \text{where }x: \CC_k \rightarrow \R \text{ satisfies}\\
& \text{(i)} \hspace{1mm} x(\emptyset) = 1, \nonumber \\
& \text{(ii)} \hspace{1mm} x(C) = 0 \text{ if the minimum distance of }C \text{ is less than }d,\nonumber \\
& \text{(iii)} \hspace{1mm} M_D(x) \text{ is positive semidefinite for each }D \in \CC_k. \nonumber
\end{align}
\indent Observe that for a code $D$ of size $k$, positive semidefiniteness of $M_D(x)$ is equivalent to nonnegativity of $x(D)$. Hence, in (\ref{align:upperbound}), we could as well assume that $x: \CC_k \rightarrow \R_+$.
\begin{proposition}\label{proposition:upperbound}
For $n_2,n_3,d,k \in \Z_{+}$, it holds that $N(n_2,n_3,d) \leq N_k(n_2,n_3,d)$. 
\end{proposition}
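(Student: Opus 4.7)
The plan is to exhibit, for every code attaining $N(n_2,n_3,d)$, an explicit feasible $x$ for the program (\ref{align:upperbound}) whose objective value equals the size of the code. This is the standard way that SDP hierarchies of this (Lasserre-like) type dominate the integer optimum, and in the present setting the construction is essentially forced by the requirement that $x$ be $\{0,1\}$-valued.

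Concretely, I would let $C^* \subseteq [2]^{n_2}[3]^{n_3}$ be a code of minimum distance at least $d$ with $|C^*| = N(n_2,n_3,d)$ and define $x : \CC_k \rightarrow \R$ by
\begin{equation*}
x(C) \coloneqq \begin{cases} 1 & \text{if } C \subseteq C^*, \\ 0 & \text{otherwise.} \end{cases}
\end{equation*}
The verification of (i) and (ii) is immediate: $\emptyset \subseteq C^*$ so $x(\emptyset) = 1$, and if $C$ has minimum distance less than $d$ then $C$ cannot be a subset of $C^*$ (any subcode inherits the lower bound on minimum distance), so $x(C) = 0$.

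For (iii), fix $D \in \CC_k$. For $C,C' \in S(D)$ one has $x(C \cup C') = 1$ if and only if $C \subseteq C^*$ and $C' \subseteq C^*$. Writing $u \in \R^{S(D)}$ for the vector with entries $u_C = \mathbbm{1}[C \subseteq C^*]$, this says $M_D(x) = uu^{T}$, which is positive semidefinite of rank at most one. Hence (iii) holds for every $D \in \CC_k$, and $x$ is feasible for (\ref{align:upperbound}). Since singletons $\{v\}$ lie in $\CC_k$ (assuming $k \geq 1$; the case $k=0$ is vacuous) and $\{v\} \subseteq C^*$ iff $v \in C^*$, the objective is
\begin{equation*}
\sum_{v \in [2]^{n_2}[3]^{n_3}} x(\{v\}) \;=\; \sum_{v \in [2]^{n_2}[3]^{n_3}} \mathbbm{1}[v \in C^*] \;=\; |C^*| \;=\; N(n_2,n_3,d),
\end{equation*}
so $N_k(n_2,n_3,d) \geq N(n_2,n_3,d)$.

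There is really no hard step here; the only thing to watch is the rank-one factorisation of $M_D(x)$, which makes the positive semidefiniteness transparent and bypasses the constraint $|D|+2|C\setminus D|\leq k$ entirely (the bound on $|C \cup C'|$ ensures $x(C \cup C')$ is well defined but is not otherwise used). The content of the proposition is therefore that the SDP (\ref{align:upperbound}) is a valid relaxation; the non-trivial work in the paper lies in the subsequent reduction of this SDP by representation theory, not in this direction of the inequality.
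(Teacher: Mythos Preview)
Your proof is correct and follows essentially the same approach as the paper: define $x$ to be the indicator of subcodes of an optimal code, note that (i) and (ii) are immediate, and observe that $M_D(x)$ factors as $uu^{T}$ (the paper phrases this as $(M_D(x))_{C,C'} = x(C)x(C')$), hence is positive semidefinite. The only cosmetic difference is that you name the optimal code $C^*$ rather than overloading the symbol $D$, which is arguably clearer.
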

\begin{proof}
Let $D \subseteq [2]^{n_2}[3]^{n_3}$ be of minimum distance at least $d$, such that $|D| =~N(n_2,n_3,d)$. Define $x: \CC_k \rightarrow \R$ by $x(C) = 1$ if $C \subseteq D$ and $x(C) = 0$ otherwise. This function clearly satisfies conditions (i) and (ii) of (\ref{align:upperbound}). Since $(M_{D}(x))_{C,C'} =~x(C)x(C')$ for all $C,C' \in \CC_k$, condition (iii) is also satisfied. Now $\sum_{v \in [2]^{n_2}[3]^{n_3}} \hspace{-0ex}x(\{v\}) = |D| = N(n_2,n_3,d)$, and the proposition follows.
\end{proof}
In this paper, we consider $k=3$. The optimization problem (1) for triples of codewords is very large. However, the problem is highly symmetric and therefore representation theory of the symmetric group can be applied in order to reduce the dimensions to size bounded by a polynomial in $n_2$ and $n_3$. This enables us to solve (1) by semidefinite programming for many choices of triples $(n_2,n_3,d) \in \N^{3}$. We will now describe the ideas of the reduction. The precise details may be found in Section \ref{section:reduction}.\\
\indent Let $G$ be the isometry group of $[2]^{n_2}[3]^{n_3}$. That is, $G$ is the group of Hamming distance-preserving bijections from $[2]^{n_2}[3]^{n_3}$ to itself. Then $G = H_2 \times H_3$, where $H_2$ is the wreath product $S_2^{n_2} \rtimes S_{n_2}$ and $H_3$ is the wreath product $S_3^{n_3} \rtimes S_{n_3}$. Here, $S_{m}$ denotes the symmetric group on $m$ letters. For $i=2,3$, an element $h \in H_i$ permutes the $n_i$ coordinates and permutes the letters in $[i]$ in every of the $n_i$ positions. The group $G$ acts on $\CC_k$ and hence on functions $x: \CC_k \rightarrow \R$, via $x^{\pi}(C) \coloneqq x(\pi^{-1}(C))$, for $\pi \in G$ and $C \in \CC_k$. By definition of $G$, minimum distances of codes are preserved under this action. Let $x: \CC_k \rightarrow \R$ be a function satisfying the conditions and maximizing the objective function of (\ref{align:upperbound}). For $\pi \in G$, the function $x^{\pi}$ again satisfies conditions (i) and (ii) of (\ref{align:upperbound}). Condition (iii) is met as well, as the matrix $M_D(x^{\pi})$ is obtained from $M_D(x)$ by simultaneously permuting rows and columns. Since $\pi$ is a bijection of $[2]^{n_2}[3]^{n_3}$, the objective function does not change when replacing $x$ by $x^{\pi}$. Averaging over the group $G$ yields a $G$-invariant function $y$, for which the matrices $M_D(y)$ are positive semidefinite by convexity of the set of positive semidefinite matrices. This shows that the optimal function $x$ can be taken to be $G$-invariant.\\
\indent Let $\Omega$ be the set of orbits of $\CC_k$ under the action of $G$. Since a $G$-invariant function $y$ is constant on orbits, for each $D \in \CC_k$ the matrix $M_D(y)$ can be written in terms of variables $y(w)$, with $w \in \Omega$. Let $G_{D}$ be the subgroup of $G$ that leaves $D$ invariant. Then $M_D(y)$ is invariant under the induced action of $G_D$ on its rows and columns. Therefore, it admits a block-diagonalization $M_D(y) \mapsto U^{T}M_D(y)U$, where $U$ is a matrix independent of $y$ (see equation (\ref{equation:phireal})). The matrix $M_D(y)$ is positive semidefinite if and only if each of the blocks is. This accounts for a large reduction as the blocks have far less entries than the original matrix, and the same block occurs repeatedly.\\
\indent For $D \in \CC_k$ and $\pi \in G$, the matrix $M_D(y)$ differs from $M_{\pi(D)}(y)$ by a permutation matrix. Hence, positive semidefiniteness of $M_D(y)$ needs only be checked for one element $D$ out of each $G$-orbit of $\CC_k$. Throwing away equivalent blocks, we are left with blocks whose entries are linear functions in the variables $y(w)$. The number of variables is bounded by a polynomial in $n_2$ and $n_3$, see Section \ref{subsection:sizeonecoef}.\\
\indent The blocks as well as some further reductions of the optimization problem will be described in Section \ref{section:reduction}. The entries of the matrices are computed in Section \ref{section:coefficients}. Table \ref{table:bounds} at the end of the article shows the improvements that were found using the multiple precision versions of the semidefinite programming algorithm SDPA, with thanks to SURFsara (\url{www.surfsara.nl}) for the support in using the LISA Compute Cluster.\\
\indent Several previously best known upper bounds were obtained via linear programming and extra constraints in \cite{brouwer98} by Brouwer,  H\"am\"al\"ainen, \"Osterg\aa rd and Sloane. For $d=3$ and $d=4$, improvements were found by {\"O}sterg{\aa}rd using backtrack search in \cite{ostergard00} and \cite{ostergard99} respectively. The tables in \cite{brouwer16}, maintained by Andries Brouwer, contain all known bounds on the size of binary/ternary error-correcting codes.\\

\subsection{Comparison with earlier bounds}

The above described method is an adaption of the one in \cite{litjens17} and builds upon the work of Gijswijt, Mittelmann, Schrijver and Tanaka in \cite{gijswijt12}, \cite{gijswijt06}, \cite{schrijver05}. Proposition \ref{proposition:upperbound} generalizes Proposition $1$ of \cite{litjens17} for the binary and ternary case. In fact, for fixed $t \in \Z_{+}$ and distinct $p_1,...,p_t \in \N$, the statement in Proposition \ref{proposition:upperbound} can be generalized to the case of mixed codes of length $n_1 + ... + n_t$, with $n_i$ coordinates chosen from an alphabet with $p_i$ letters, for $i = 1,...,t$.\\
\indent The method described in the previous section (with $k=3$) fits into the second level of the Lasserre hierarchy for stable sets. It can be proved that for $k=2$, Proposition \ref{proposition:upperbound} reduces to the pure linear programming bound described in Section $2$ of \cite{brouwer98}.\\
\indent Theoretically, our method could be extended to $k \geq 4$. However, the number of variables involved in the semidefinite program grows rapidly when going from $k=3$ to $k=4$. In practice, for $k=4$ only one case could be made tractable. Furthermore, the instances in the tables in \cite{brouwer16} where the value $N(n_2,n_3,d)$ is yet unsettled, typically involve codes for which the length $n_2+n_3$ is large compared to the distance $d$. This amounts to many and large constraint matrices. 

\section{Preliminaries on representation theory}

In this section some background information on group actions and representation theory of finite groups is given. It mostly concerns representation theory of the symmetric group. Proofs and details of the statements given are omitted. For these we refer the reader to chapters $1$ and $2$ of Sagan's book \cite{sagan01}. Furthermore, this section is intended to set up the notation that is used throughout the article.\\
\indent Let $G$ be a finite group and $X$ a set. Let $S_X$ denote the group of bijections from $X$ to itself. A \textit{group action} from $G$ on $X$ is a group homomorphism $G \rightarrow S_X$. If $G$ acts on $X$, we denote $g \cdot x$ for the image of $x$ under the bijection associated to $g$, where $x \in X$ and $g \in G$. If $X$ is \textit{linear}, elements of $S_X$ are also assumed to be linear. This applies for example to the following situation. For a field $K$ and a set $X$, let $K^X$ denote the linear space of maps from $X$ to $K$. If $G$ acts on $X$, then $G$ acts on $K^X$ by $(g \cdot f)(x) \coloneqq f(g^{-1} \cdot x)$, for all $g \in G, f \in K^X$ and $x \in X$. Lastly, by $X^G$ we denote the set of elements of $X$ that are left invariant by all of $G$.\\
\indent The following review of the representation theory of finite groups is not as general as possible, but rather concrete, which suits our purposes. Let $m \in \Z_+$ and let $V = \C^{m}$ be acted upon by a finite group $G$. Then $V$ is called a \textit{G-module}. If $W$ is another $G$-module, a \textit{G-homomorphism} from $V$ to $W$ is a linear map $\phi: V \rightarrow W$ such that $g \cdot \phi(v) = \phi(g \cdot v)$, for all $g \in G$ and $v \in V$. The module $V$ is called \textit{irreducible} if it has no nontrivial $G$-invariant submodules.\\
\indent Assume now that $G$ acts \textit{unitarily} on $V$. This means that for every $g \in G$ there is a unitary matrix $U$ such that $g \cdot v = Uv$ for all $v \in V$. Then the standard inner product $\langle v,w \rangle = v^*w$ on $V$, where $^*$ denotes the complex conjugate, is a $G$-invariant inner product, i.e., $\langle g\cdot v, g\cdot w\rangle = \langle v,w\rangle$ for all $g \in G$ and $v,w \in V$. If $U \subset V$ is a submodule, then so is $U^{\perp} \coloneqq \{v \in V \hspace{1mm} | \hspace{1mm} \langle v,u\rangle = 0 \hspace{1mm} \forall u \in U\}$. This shows that $V$ admits a decomposition into pairwise orthogonal irreducible submodules (Maschke's theorem). Grouping mutually isomorphic submodules, we write $V = V_1 \oplus ... \oplus V_k$ as a direct sum of \textit{isotypic components}. For each $i \leq k$, there is an $m_i \in \N$, called the \textit{multiplicity} of $V_{i,1}$ in $V$, such that $V_i = V_{i,1} \oplus ... \oplus V_{i,m_i}$. We have that $V_{i,j}$ and $V_{i',j'}$ are isomorphic irreducible $G$-modules if and only if $i = i'$.\\
\indent With notation as above, Schur's lemma implies that the space of $G$-endomorphisms of $V$ is linearly isomorphic to a direct sum of matrix algebras with sizes given by the multiplicities:
\[
(\C^{m \times m})^{G} \cong \bigoplus_{i=1}^{k}\C^{m_i \times m_i}.
\]
We describe an explicit isomorphism. For every $i \leq k$ and $j \leq m_i$, choose a nonzero vector $u_{i,j} \in V_{i,j}$ such that for every $i \leq k$ and $j,j' \leq m_i$ there exists a $G$-isomorphism from $V_{i,j}$ to $V_{i,j'}$ that maps $u_{i,j}$ to $u_{i,j'}$. Consider the matrix $U_i = [u_{i,1},...,u_{i,m_i}]$ for $i \leq k$ whose columns are given by the vectors $u_{i,j}$. 
\begin{defn}
In the situation as described above, any set of matrices $\{U_1,...,U_k\}$ is called a \textit{representative set} for the action of $G$ on $V$.
\end{defn}
If $\{U_1,...,U_k\}$ is a representative set, then the function
\begin{equation}\label{equation:phi}
\Phi: (\C^{m \times m})^{G} \rightarrow \bigoplus_{i=1}^{k}\C^{m_{i} \times m_{i}}, \hspace{2mm} A \mapsto \bigoplus_{i=1}^{k}U_{i}^{*}AU_{i},
\end{equation}
is a linear isomorphism (see Theorem $3$ of \cite{gijswijt14} for a proof). Recall that a complex-valued matrix is \textit{positive semidefinite} if it is a Hermitian matrix whose eigenvalues are all nonnegative. An important property of $\Phi$ is that both $\Phi$ and its inverse preserve positive semidefiniteness.\\
\indent In this article, the previous is applied to the case where a finite group $G$ acts \textit{real-orthogonally} on a vector space $V = \R^{m}$. This means that for every $g \in G$ there is a real orthogonal matrix $U$ such that $g \cdot v = Uv$ for every $v \in V$. We will describe a representative set $\{U_1,...,U_k\}$ for the action of $G$ on $V$ consisting of real matrices. In that situation, $V$ can be decomposed as
\[
V = \bigoplus_{i=1}^{k}\bigoplus_{j=1}^{m_i}\R G\cdot u_{i,j},
\]
where $\R G$ is the \textit{group algebra} of $G$. The map $\Phi$ in (\ref{equation:phi}) becomes
\begin{equation}\label{equation:phireal}
\Phi: (\R^{m \times m})^{G} \rightarrow \bigoplus_{i=1}^{k}\R^{m_{i} \times m_{i}}, \hspace{2mm} A \mapsto \bigoplus_{i=1}^{k}U_{i}^{T}AU_{i},
\end{equation}
where $^{T}$ denotes taking the transpose. Then $A$ is positive semidefinite if and only if each of the blocks $U_i^{T}AU_i$ is. For reasons that become apparent later, we view the columns $u_{i,j}$ of the matrices in the representative set as elements of the \textit{dual space} $V^{*}$ via the $G$-invariant inner product.

\subsection{A representative set for the action of $S_n$ on $V^{\otimes n}$}

For $n \in \N$, consider the action of the symmetric group $S_n$ on a finite dimensional real vector space $V^{\otimes n}$ by permuting the indices. We will describe a representative set for this action in terms of \textit{semistandard Young tableaux}.\\
\indent A \textit{partition} $\lambda$ of $n$ is a sequence of natural numbers $\lambda_1 \geq ... \geq \lambda_t > 0$ such that $n = \lambda_1 + ... + \lambda_t$. The number $t$ is called the \textit{height} of $\lambda$. If $\lambda$ partitions $n$, we write $\lambda \vdash n$ to indicate this. With respect to a partition $\lambda \vdash n$ of height $t$, we define the \textit{Ferrers diagram} $Y(\lambda)$ as
\[
Y(\lambda) \coloneqq \{(i,j) \in \Z_+^2 \hspace{1mm} | \hspace{1mm} 1 \leq j \leq t, 1 \leq i \leq \lambda_j\}.
\]
Fixing a $j \leq t$, the elements $(i,j)$ in $Y(\lambda)$ where $i$ varies, form the \textit{j-th row} of $Y(\lambda)$. Likewise, when an $i \leq \lambda_1$ is fixed and the $j$ vary, the elements $(i,j)$ in $Y(\lambda)$ form the \textit{i-th column}. With respect to $\lambda$, we define two subgroups of $S_{Y(\lambda)}$. The group $R_{\lambda}$ is the subgroup of $S_{Y(\lambda)}$ consisting of permutations $\pi$ such that $\pi(Z) = Z$ for each row $Z$ of $Y(\lambda)$. It is called the \textit{row stabilizer}. The group $C_{\lambda}$ contains all permutations $\pi$ such that $\pi(Z) = Z$ for all columns $Z$ of $Y(\lambda)$ and is called the \textit{column stabilizer}.\\
\indent Let $\lambda \vdash n$. For $m \in \Z_+$, a \textit{Young tableau} with entries in $[m]$  is a function $\tau: Y(\lambda) \rightarrow [m]$. Two Young tableaux $\tau$ and $\tau'$ are called \textit{row equivalent}, written $\tau \sim \tau'$, if there exists a $\pi \in R_{\lambda}$ such that $\tau' =\tau\pi$. A Young tableau is \textit{semistandard} if in each row the entries are nondecreasing and if in each column the entries are increasing. By $T_{\lambda, m}$ we denote the set of semistandard Young tableaux with entries in $[m]$. Note that $T_{\lambda,m}$ is nonempty if and only if $m$ is larger than or equal to the height of $\lambda$.\\
\indent  Let $(B(1),...,B(m))$ be an \textit{ordered} basis of the dual space $V^*$. For a Young tableau $\tau: Y(\lambda) \rightarrow [m]$, we define
\[
u_{\tau,B} \coloneqq \sum_{\tau' \sim \tau}\sum_{c \in C_{\lambda}}\text{sgn}(c)\bigotimes_{y \in Y(\lambda)} B(\tau' c(y)).
\]
Here, we order $Y(\lambda)$ by concatenating the rows, starting from the first row. The matrix set
\[
\{ \hspace{2mm} [u_{\tau,B} \hspace{1mm} | \hspace{1mm} \tau \in T_{\lambda,m}] \hspace{2mm} | \hspace{2mm} \lambda \vdash n\}
\]
is a representative set for the action of $S_n$ on $V^{\otimes n}$.

\section{Reduction of the optimization problem}\label{section:reduction}

In this section we describe the reduction of the optimization problem (\ref{align:upperbound}), using the notation set up in the previous sections. This is done by finding representative sets for the action of $G_D$ on $\R^{S(D)}$ for one code $D$ out of each orbit $w$ in $\Omega$. Fix $n_2,n_3,d \in \Z_+$ and set $k=3$. If a code $D \in \CC_k$ has size $2$ or $3$, then $S(D) = \{D\}$ and $M_D(y) = (y(D))$. Condition (iii) of (\ref{align:upperbound}) then amounts to nonnegativity of the variable $y(D)$. Subsequently, we need only to deal with codes $D$ with $|D| = 0$ or $|D| = 1$. 

\subsection{A code of size one}\label{subsection:sizeone}

Since the isometry group $G$ acts transitively on $[2]^{n_{2}}[3]^{n_{3}}$, we may assume that a code $D$ of size one consists of the all-zero word. The rows and columns of $M_{D}(y)$ are parametrized by pairs of words that contain the all-zero word. The stabilizer subgroup $G_D$ of $D$ in $G$ equals $S_{n_{2}} \times (S_{2}^{n_{3}} \rtimes S_{n_{3}})$. To obtain a representative set for the action of $G_{D}$ on $\R^{S(D)}$, we first describe a representative set for the action of $G_{D}$ on $\R^{[2]^{n_2}[3]^{n_3}}$ and then restrict to words of weight zero or at least $d$.\\
\indent In order to obtain a representative set, consider independently the action of the trivial group on $\R^{[2]}$ and the action of $S_2$ on $\R^{[3]}$, permuting the nonzero letters. Let $e_{j}$ be the $j$-th unit vector of $\R^{[2]}$, with $j = 1,2$ and let $f_{l}$ be the $l$-th unit vector of $\R^{[3]}$, with $l = 1,2,3$. Define the following matrices
\begin{align}\label{align:atjes}
&A_1 \coloneqq [e_1, e_2], \hspace{1mm} A_2 \coloneqq [f_1,f_2+f_3] \text{ and} \hspace{1mm} A_3 \coloneqq [f_2-f_3], 
\end{align}
where we view the vectors as columns vectors. Then $\{A_1\}$ and $\{A_2,A_3\}$ form representative sets for the actions just described\footnote{The vectors $e_1, e_2$ and $f_1, f_2+f_3$ span different copies of the trivial representation inside $\R^{[2]}$ and $\R^{[3]}$ respectively. The vector $f_2-f_3$ spans a copy of the sign representation of $S_2$ inside $\R^{[3]}$.}. \\
\indent Set $m_1= m_2=2$ and $m_3=1$ and let $\mathbf{N_{1}}$ denote the set of triples $(n_2,l_2,l_3) \in \Z_+^{3}$ such that $l_2 + l_3 = n_3$. For $\mathbf{n} = (n_2,l_2,l_3) \in \mathbf{N_{1}}$, by $\pmb{\lambda} \vdash \mathbf{n}$ we indicate that $\pmb{\lambda} = (\lambda_1,\lambda_2,\lambda_3)$ with $\lambda_1 \vdash n_2, \lambda_2 \vdash l_2$ and $\lambda_3 \vdash l_3$. Let $\pmb{\lambda} \vdash \mathbf{n}$, then we define
\[
W_{\pmb{\lambda}} \coloneqq \prod_{i=1}^{3}T_{\lambda_{i},m_{i}}.
\]
For $\pmb{\tau} = (\tau_{1},\tau_{2},\tau_{3}) \in W_{\pmb{\lambda}}$ we define
\begin{equation}\label{equation:tensora}
u_{\pmb{\tau}} \coloneqq \bigotimes_{i=1}^{3} u_{\tau_{i},A_{i}}.
\end{equation}
Then Proposition $2$ of \cite{litjens17} implies the following.

\begin{proposition}
The matrix set
\[
\{ \hspace{2mm} [u_{\pmb{\tau}} \hspace{1mm} | \hspace{1mm} \pmb{\tau} \in W_{\pmb{\lambda}} ] \hspace{2mm} | \hspace{2mm} \mathbf{n} \in \mathbf{N_{1}}, \pmb{\lambda} \vdash \mathbf{n}\}
\]
is a representative set for the action of $G_{D}$ on $\R^{[2]^{n_2}[3]^{n_3}}$.
\end{proposition}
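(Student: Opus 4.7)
The plan is to reduce to the setup of the previous subsection (Section~2.1) by decomposing the ambient space as a tensor product that is compatible with the product structure of $G_D$, and then assemble representative sets factor by factor.

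First I would write $\R^{[2]^{n_2}[3]^{n_3}} \cong (\R^{[2]})^{\otimes n_2} \otimes (\R^{[3]})^{\otimes n_3}$ and observe that $G_D = S_{n_2} \times (S_2^{n_3} \rtimes S_{n_3})$ acts accordingly: $S_{n_2}$ permutes the $n_2$ binary tensor factors, while the wreath product $S_2^{n_3} \rtimes S_{n_3}$ acts on $(\R^{[3]})^{\otimes n_3}$ by permuting nonzero letters in each factor (via the copies of $S_2$) and permuting the $n_3$ ternary factors. For the single-factor actions, the footnote already identifies $\{A_1\}$ as a representative set for the trivial group on $\R^{[2]}$ (two copies of the trivial representation) and $\{A_2,A_3\}$ as a representative set for $S_2$ on $\R^{[3]}$ (two copies of the trivial representation and one copy of the sign representation).

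Next I would handle the ternary part by splitting each of the $n_3$ factors into its two isotypic components. Concretely, using $\R^{[3]} = \R\{f_1, f_2+f_3\} \oplus \R\{f_2-f_3\}$ as $S_2$-modules, expanding the $n_3$-fold tensor distributes over the direct sum and yields
\[
(\R^{[3]})^{\otimes n_3} \;=\; \bigoplus_{l_2+l_3=n_3} \operatorname{Ind}_{S_{l_2}\times S_{l_3}}^{S_{n_3}}\!\Big( (\R\{f_1,f_2+f_3\})^{\otimes l_2} \otimes (\R\{f_2-f_3\})^{\otimes l_3}\Big),
\]
where the induction corresponds to choosing which $l_2$ coordinates carry the trivial isotypic component and which $l_3$ carry the sign isotypic component; the action of $S_2^{n_3}$ collapses to $\pm 1$ on the sign slots and is trivial on the others. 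Thus over $\R^{[2]^{n_2}[3]^{n_3}}$ the action of $G_D$ is, up to this decomposition, precisely the action of $S_{n_2}\times S_{l_2}\times S_{l_3}$ on a tensor product $V_1^{\otimes n_2} \otimes V_2^{\otimes l_2} \otimes V_3^{\otimes l_3}$ where each $V_i$ has an ordered basis of size $m_i$ (namely the columns of $A_i$).

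Now I would invoke the semistandard-tableau construction from Section~2.1: for the action of $S_k$ on $V^{\otimes k}$ with a chosen ordered basis of $V^*$, the matrices $[u_{\tau,B}\mid \tau\in T_{\lambda,m}]$, $\lambda\vdash k$, form a representative set. Applying this independently to each of the three tensor powers and taking tensor products gives, for $\mathbf{n}=(n_2,l_2,l_3)\in\mathbf{N_1}$ and $\pmb{\lambda}\vdash\mathbf{n}$, the vectors $u_{\pmb{\tau}}=\bigotimes_{i=1}^3 u_{\tau_i,A_i}$ of equation~(\ref{equation:tensora}). Since the product of representative sets for independent group actions on tensor factors is a representative set for the product group on the tensor product (the tensor product of irreducibles over distinct groups is irreducible, and multiplicities multiply), the claimed matrix set is a representative set for $G_D$ on $\R^{[2]^{n_2}[3]^{n_3}}$. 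This is exactly the content of Proposition~2 of \cite{litjens17}, which I would cite to close the argument.

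The main obstacle is bookkeeping in the ternary part: verifying carefully that after the isotypic splitting of $(\R^{[3]})^{\otimes n_3}$, the residual action of the wreath product reduces to the direct sum over $l_2+l_3=n_3$ of the permutation actions of $S_{l_2}\times S_{l_3}$ on the corresponding tensor powers. Once that is in place, the rest is a routine combination of Schur--Weyl style representative sets across independent tensor factors.
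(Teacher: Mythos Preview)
Your proposal is correct and matches the paper's approach: the paper's entire proof is the single sentence ``Then Proposition~2 of \cite{litjens17} implies the following,'' and you arrive at the same citation after unpacking the tensor decomposition and the wreath-product structure that this proposition encapsulates. Your extra detail about splitting $(\R^{[3]})^{\otimes n_3}$ into $S_2$-isotypic pieces and recognizing the resulting induced module is precisely the content hidden behind that citation, so there is no substantive difference.
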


Next we reduce to words of weight zero or at least $d$. For a word $v \in [2]^{n_2}[3]^{n_3}$, write $v = v_2v_3$ with $v_2 \in [2]^{n_2}$ and $v_3 \in [3]^{n_3}$. Then we define the vector 
\[
w_v \coloneqq (w(v_2),w(v_3)),
\]
in $\Z_+^2$, with $w(v_i)$ the weight of $v_i$. Given $w = (w_2,w_3) \in \Z_+^2$, let $V_w$ denote the linear subspace of $\R^{[2]^{n_2}[3]^{n_3}}$ spanned by unit vectors $e_v$, with $v$ a word for which $w_v = w$. For any $u_{\pmb{\tau}}$ with $\pmb{\tau} = (\tau_{1},\tau_{2},\tau_{3})$ as in (\ref{equation:tensora}) the irreducible representation $\R G_D \cdot u_{\pmb{\tau}}$ is contained in $V_w$, where $w = (w_2,w_3)$ with
\[
w_2 = n_2 - |\tau_{1}^{-1}(1)| \hspace{1mm} \text{ and } \hspace{1mm} w_3 = n_3 - |\tau_{2}^{-1}(1)|.
\]
Indeed, every permutation of $G_{D}$ leaves the weight of a word invariant. We now define
\[
W_{\pmb{\lambda}}' \coloneqq \{\pmb{\tau} \in W_{\pmb{\lambda}} \hspace{1mm} | \hspace{1mm} n_2+n_3 - |\tau_{1}^{-1}(1)| - |\tau_{2}^{-1}(1)| \in \{0,d,d+1,...,n_2+n_3\}\}.
\]
Then a representative set for the action of $G_{D}$ on $\R^{S(D)}$ is given by the matrix set
\begin{equation}\label{equation:reprsetg}
\{ \hspace{2mm} [u_{\pmb{\tau}} \hspace{1mm} | \hspace{1mm} \pmb{\tau} \in W_{\pmb{\lambda}}' ] \hspace{2mm} | \hspace{2mm} \mathbf{n} \in \mathbf{N_{1}}, \pmb{\lambda} \vdash \mathbf{n}\}.
\end{equation}

\subsection{$\boldsymbol{D = \emptyset}$}\label{subsection:sizezero}

Let $D = \emptyset$. Then $S(D)$ is the collection of singletons together with the empty set and $G_{D} = G$. To obtain a representative set for the action of $G_{D}$ on $\R^{S(D)}$, we first consider the action of $G_{D}$ on $\R^{[2]^{n_2}[3]^{n_3}}$ and later add the empty code.\\
\indent For $i=2,3$, let $S_{i}$ act on $\R^{[i]}$ by permuting the letters. Representative sets are given by\footnote{The vector $e_1+e_2$ spans a copy of the trivial representation of $S_2$ in $\R^{[2]}$ and the vector $e_1-e_2$ accounts for the sign representation. The space $\R^{[3]}$ decomposes as a $S_3$-module into the standard representation, spanned by for example $f_1-f_2$ and $f_2-f_3$, and the trivial representation, spanned by $f_1+f_2+f_3$.} $\{B_1,B_2\}$ for $i=2$ and $\{B_3,B_4\}$ for $i=3$, where
\begin{align}\label{align:b1234}
&B_1 \coloneqq [e_1+e_2], \hspace{1mm} B_2 \coloneqq [e_1-e_2], \hspace{1mm} B_3 \coloneqq [f_1+f_2+f_3] \text{ and} \hspace{1mm} B_4 \coloneqq [f_1-f_2].
\end{align}
\indent Set $m_1=m_2=m_3=m_4=1$ and let $\mathbf{N_{0}}$ denote the set of quadruples $(l_1,l_2,l_3,l_4) \in \Z_+^4$ such that $l_1 + l_2 = n_2$ and $l_3+l_4=n_3$. For $\mathbf{n} = (l_1,l_2,l_3,l_4) \in \mathbf{N_{0}}$, by $\pmb{\lambda} \vdash \mathbf{n}$ we indicate that $\pmb{\lambda} = (\lambda_1,\lambda_2,\lambda_3,\lambda_4)$ with $\lambda_i \vdash l_i$ for $1 \leq i \leq 4$. Let $\pmb{\lambda} \vdash \mathbf{n}$, then we define
\[
Z_{\pmb{\lambda}} \coloneqq \prod_{i=1}^{4} T_{\lambda_i,m_i}.
\]
For $\pmb{\tau} = (\tau_1,\tau_2,\tau_3,\tau_4) \in Z_{\pmb{\lambda}}$ we define
\[
v_{\pmb{\tau}} \coloneqq \bigotimes_{i=1}^{4}u_{\tau_{i},B_i}.
\]
Using Proposition $2$ of \cite{litjens17} again yields the following representative set.

\begin{proposition}\label{proposition:reprsetleeg}
The matrix set
\begin{equation}\label{equation:reprsetleeg}
\{ \hspace{2mm} [v_{\pmb{\tau}} \hspace{1mm} | \hspace{1mm} \pmb{\tau} \in Z_{\pmb{\lambda}} ] \hspace{2mm} | \hspace{2mm} \mathbf{n} \in \mathbf{N_{0}}, \pmb{\lambda} \vdash \mathbf{n}\}
\end{equation}
is a representative set for the action of $G_{D}$ on $\R^{[2]^{n_2}[3]^{n_3}}$.
\end{proposition}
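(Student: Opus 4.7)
The plan is to exploit the product structure $G_D = H_2 \times H_3$ together with the tensor factorization $\R^{[2]^{n_2}[3]^{n_3}} \cong (\R^{[2]})^{\otimes n_2} \otimes (\R^{[3]})^{\otimes n_3}$, under which $H_2$ acts on the first factor and $H_3$ on the second. A preliminary fact I would record is that if $\{U_\alpha\}_\alpha$ is a representative set for a group $G_1$ on $V_1$ and $\{U'_\beta\}_\beta$ is one for $G_2$ on $V_2$, then the Kronecker products $\{U_\alpha \otimes U'_\beta\}_{\alpha,\beta}$ form a representative set for $G_1 \times G_2$ on $V_1 \otimes V_2$; this is immediate from the classification of irreducibles of a product group as outer tensor products of irreducibles of the factors. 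Consequently it suffices to construct representative sets for $H_2$ on $(\R^{[2]})^{\otimes n_2}$ and for $H_3$ on $(\R^{[3]})^{\otimes n_3}$ separately and then tensor them together.

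For each $i \in \{2,3\}$, I would feed a representative set for $S_i$ acting on $\R^{[i]}$ (by permuting letters) into Proposition $2$ of \cite{litjens17}, which outputs from such input a representative set for the wreath product $H_i = S_i^{n_i} \rtimes S_{n_i}$ on $(\R^{[i]})^{\otimes n_i}$, built using the semistandard Young tableau vectors $u_{\tau,B}$ recorded in Section $2$. For $i=2$, the $S_2$-module $\R^{[2]}$ decomposes as the trivial plus the sign representation, each with multiplicity one; the footnote at (\ref{align:b1234}) records that $B_1$ and $B_2$ pick one nonzero vector from each isotypic component, giving $m_1 = m_2 = 1$. For $i=3$, likewise $\R^{[3]}$ decomposes as trivial plus standard with each occurring once, and $B_3$, $B_4$ span the trivial isotypic component and one line inside the standard component respectively, giving $m_3 = m_4 = 1$.

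Combining the two outputs via the tensor-product principle and rewriting in the notation of the statement yields a representative set whose matrices are parametrized by $\mathbf{n} = (l_1,l_2,l_3,l_4) \in \mathbf{N_0}$ and by tuples $\pmb{\lambda} \vdash \mathbf{n}$, whose columns are indexed by $\pmb{\tau} \in Z_{\pmb{\lambda}}$, and whose column vectors are $v_{\pmb{\tau}} = \bigotimes_{i=1}^{4} u_{\tau_i, B_i}$; this is precisely the matrix set in (\ref{equation:reprsetleeg}). The point demanding the most care is the appeal to Proposition $2$ of \cite{litjens17}: once one has verified that $\{B_1,B_2\}$ and $\{B_3,B_4\}$ are genuine representative sets for the $S_i$-actions — which follows at once from the isotypic decompositions above — the fact that the corresponding Young-tableau vectors then form a representative set for $H_i$, with the correct multiplicities and the correct assignment of vectors to multiplicity spaces, is invoked as a black box from that proposition rather than reproved.
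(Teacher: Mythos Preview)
Your proposal is correct and follows essentially the same route as the paper: the paper's own proof is simply the one-line invocation ``Using Proposition~2 of \cite{litjens17} again yields the following representative set.'' Your write-up merely unpacks what that citation entails (the tensor-product principle for $G_1\times G_2$ and the verification that $\{B_1,B_2\}$ and $\{B_3,B_4\}$ are representative sets for the $S_i$-actions), so there is no substantive difference in approach.
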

Next we have to add the empty code $D$. Since $G_{D}$ acts trivially on $D$, the vector $e_{\emptyset}$ should be added to the $G_D$-isotypic component that consists of the $G_D$-invariants. This is the matrix indexed by the partition $\pmb{\lambda} = ((n_2),(),(n_3),())$ of $\mathbf{n} = (n_2,0,n_3,0)$. Here, $()$ denotes the partition of zero and $(n_i)$ the partition of $n_i$ of height one, for $i=2,3$. 

\section{Computation of the coefficients}\label{section:coefficients}

In the previous section representative sets for the action of $G_D$ on $\R^{S(D)}$ were found for the case that $D$ is the empty code and for the case that $D$ consists of the all-zero word. These sets are used to block-diagonalize the matrix $M_{D}(y)$ in either case. In this section we show that the sizes and the number of the blocks are bounded by a polynomial in $n_2$ and $n_3$. Furthermore, it is derived that the coefficients of the blocks can be computed efficiently. As before, we make a distinction between a code $D$ of size zero and one, starting with the latter.

\subsection{A code of size one}\label{subsection:sizeonecoef}

Let $D$ be the code consisting of the all-zero word $\underline{\boldsymbol{0}}$. Let $\Omega$ be the set of orbits of $\CC_3$ under the action of $G$. Recall that $S(D)$ consists of pairs of words containing $\underline{\boldsymbol{0}}$. For $w \in \Omega$, we define the $S(D) \times S(D)$ matrix $N_w$ by
\[
(N_{w})_{\{\underline{\boldsymbol{0}},x\},\{\underline{\boldsymbol{0}},y\}} \coloneqq \begin{cases} 1 \hspace{1mm}\text{ if } \{\underline{\boldsymbol{0}},x,y\} \in w\\
0 \hspace{1mm}\text{ otherwise}
\end{cases}
\]
\noindent Consider again the representative set from (\ref{equation:reprsetg}). Given $\mathbf{n} \in \mathbf{N_1}$ and $\pmb{\lambda} \vdash \mathbf{n}$, let $U_{\pmb{\lambda}}$ be the matrix corresponding to $\pmb{\lambda}$ and $\mathbf{n}$. Applying the map $\Phi$ from (\ref{equation:phireal}) to $M_{D}(y)$ gives\vspace{1mm}
\[
M_D(y) \mapsto \bigoplus_{\mathbf{n} \in \mathbf{N_{1}}}\bigoplus_{\pmb{\lambda} \vdash \mathbf{n}}U_{\pmb{\lambda}}^TM_D(y)U_{\pmb{\lambda}} = \bigoplus_{\mathbf{n} \in \mathbf{N_{1}}}\bigoplus_{\pmb{\lambda} \vdash \mathbf{n}}\sum_{w \in \Omega}y(w)U_{\pmb{\lambda}}^TN_wU_{\pmb{\lambda}}.
\]
This implies that we have to compute the blocks $U_{\pmb{\lambda}}^TN_wU_{\pmb{\lambda}}$ for all $\pmb{\lambda} \vdash \mathbf{n}$ and for all $w \in \Omega$.  We first argue that the sizes and number of these blocks are bounded by a polynomial in $n_2$ and $n_3$.\\
\indent From Section \ref{subsection:sizeone} it is clear that $|\mathbf{N_1}| = n_3+1$ and that for each $\mathbf{n} \in \mathbf{N_1}$, there is polynomial number (in $n_2$ and $n_3$) of $\pmb{\lambda}$ that partition $\mathbf{n}$. For each $\pmb{\lambda} \vdash \mathbf{n}$, with $\pmb{\lambda} = (\lambda_1,\lambda_2,\lambda_3)$ and such that the height of $\lambda_1$ and $\lambda_2$ is at most $2$ and the height of $\lambda_3$ is at most $1$, the cardinality of $W_{\pmb{\lambda}}'$ is seen to be bounded polynomially in $n_2$ and $n_3$ as well. Observe that $\Omega = \Omega_2 \times \Omega_3$, where $\Omega_i$ is the set of orbits of the collection of codes in $[i]^{n_i}$ of size at most $3$ under the action of $H_i = S_i^{n_i} \rtimes S_{n_i}$. The observations preceding Lemma $1$ of \cite{litjens17} show that $\Omega_2$ is polynomially bounded in size by $n_2$, and $\Omega_3$ similarly by $n_3$. This settles the first part of this section. Next we turn to computing the coefficients of the blocks $U_{\pmb{\lambda}}^TN_wU_{\pmb{\lambda}}$ for all $\mathbf{n} \in \mathbf{N_1}, \pmb{\lambda} \vdash \mathbf{n}$ and for all $w \in \Omega$.\\
\indent Given $\pmb{\lambda} \vdash \mathbf{n}$, calculating the coefficients amounts to computing the expressions $u_{\pmb{\sigma}}^TN_wu_{\pmb{\tau}}$, where $\pmb{\sigma}$ and $\pmb{\tau}$ range over $W_{\pmb{\lambda}}'$. We introduce some notation. Let $\Pi_2$ and $\Pi_3$ denote the collection of partitions of $\{1,2,3\}$ into at most $2$ parts and at most $3$ parts respectively. For $i = 2,3$ and for a word $v \in [i]^{3}$, let $\text{part}(v)$ denote the partition in $\Pi_i$ where $j$ and $l$ are in the same class of $\text{part}(v)$ if and only if $v_j = v_l$, for $1 \leq j,l \leq 3$. This gives a bijective correspondence between $\Pi_i$ and the number of orbits of $[i]^{3}$ under the natural action of $S_i$.\\
\indent For $P \in \Pi_2$, let $c_P$ be the average of $e_i \otimes e_j$ in $\R^{[2]} \otimes \R^{[2]}$ such that $\text{part}(0ij) = P$, with $i,j \in [2]$. Similarly, for $P \in \Pi_3$, let $d_P$ be the average of $f_i \otimes f_j$ in $\R^{[3]} \otimes \R^{[3]}$ such that $\text{part}(0ij) = P$, with $i,j \in [3]$.  Then the sets
\[
M_2 = \{c_P \hspace{1mm} | \hspace{1mm} P \in \Pi_2\} \text{ and } M_3 = \{d_P \hspace{1mm} | \hspace{1mm} P \in \Pi_3\}
\]
form orthogonal bases for $\R^{[2]} \otimes \R^{[2]}$ and $(\R^{[3]} \otimes \R^{[3]})^{S_2}$ respectively, where $S_2$ permutes the nonzero letters. Let $M_i^*$ denote the dual basis of $M_i$ for $i=2,3$. Let $Q_{2}$ denote the set of monomials of degree $n_2$ on $\R^{[2]} \otimes \R^{[2]}$ and $Q_3$ those of degree $n_3$ on $(\R^{[3]} \otimes \R^{[3]})^{S_2}$. Analogous to Section $4$ of \cite{litjens17}, the function $([2]^{n_2}[3]^{n_3})^{3} \rightarrow \CC_3$, that maps an \textit{ordered} triple $(\alpha,\beta,\gamma)$ to $\{\alpha,\beta,\gamma\}$, induces a surjective function 
\[
\kappa: Q_2 \times Q_3 \rightarrow \Omega \setminus \{\emptyset\}.
\]
For any $\mu \in Q_{2}$ and $\nu \in Q_{3}$, define
\[
K_{\mu,\nu} \coloneqq \sum_{\substack{c_{1},...,c_{n_{2}} \in M_2 \\
c_{1}^*\cdot...\cdot c_{n_{2}}^*=\mu}}\sum_{\substack{d_{1},...,d_{n_{3}} \in M_3 \\
d_{1}^*\cdot...\cdot d_{n_{3}}^*=\nu}} (\bigotimes_{j=1}^{n_{2}}c_{j}) \otimes (\bigotimes_{l=1}^{n_{3}}d_{l}).
\]

\begin{lemma}
Let $w \in \Omega$. Then we have that
\[
N_w = \sum_{\substack{(\mu,\nu) \in Q_{2} \times Q_{3}\\
\kappa(\mu,\nu) = w}}K_{\mu,\nu}.
\]
\end{lemma}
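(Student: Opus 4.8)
The plan is to compute all entries of the two matrices and compare them entrywise. Reordering tensor factors identifies $(\R^{[2]}\otimes\R^{[2]})^{\otimes n_2}\otimes(\R^{[3]}\otimes\R^{[3]})^{\otimes n_3}$ with $\R^{[2]^{n_2}[3]^{n_3}}\otimes\R^{[2]^{n_2}[3]^{n_3}}=\R^{S(D)\times S(D)}$, so $N_w$ and each $K_{\mu,\nu}$ are $S(D)\times S(D)$ matrices; I index rows and columns by words $x,y$, with $x$ standing for $\{\underline{\boldsymbol{0}},x\}$ and $y$ for $\{\underline{\boldsymbol{0}},y\}$. To a pair $(x,y)$, written $x=x_2x_3$, $y=y_2y_3$, attach the \emph{type}
\[
(\mu_{x,y},\nu_{x,y})\;\coloneqq\;\Big(\ \prod_{j=1}^{n_2}c^{*}_{\,\text{part}(0,x_2(j),y_2(j))}\ ,\ \ \prod_{l=1}^{n_3}d^{*}_{\,\text{part}(0,x_3(l),y_3(l))}\ \Big)\ \in\ Q_2\times Q_3 .
\]
It then suffices to establish two claims: (a) the $(x,y)$-entry of $K_{\mu,\nu}$ is $1$ if $(\mu_{x,y},\nu_{x,y})=(\mu,\nu)$ and $0$ otherwise; and (b) $\{\underline{\boldsymbol{0}},x,y\}\in w$ if and only if $\kappa(\mu_{x,y},\nu_{x,y})=w$. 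Given these, the $(x,y)$-entry of $\sum_{\kappa(\mu,\nu)=w}K_{\mu,\nu}$ equals $1$ exactly when $(\mu_{x,y},\nu_{x,y})\in\kappa^{-1}(w)$, hence exactly when $\{\underline{\boldsymbol{0}},x,y\}\in w$, which is the $(x,y)$-entry of $N_w$; and if $w$ is the orbit of the empty code both sides are zero. So the lemma follows.

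Claim (b) is essentially the definition of $\kappa$. Since $G$ acts by coordinate permutations together with letterwise permutations, the $G$-orbit of an ordered triple of words is determined by the two multisets (one for the binary, one for the ternary block) of the coordinatewise $S_i$-orbits, and these multisets are exactly the data recorded by an element of $Q_2\times Q_3$, via the identification of $\Pi_i$ with the set of $S_i$-orbits on $[i]^3$ given by $\text{part}$. As $\kappa$ is induced by passing from an ordered triple to the $G$-orbit of the code it spans, and the ordered triple $(\underline{\boldsymbol{0}},x,y)$ has type $(\mu_{x,y},\nu_{x,y})$, claim (b) follows; surjectivity of $\kappa$, already established, ensures every orbit besides $\{\emptyset\}$ does occur. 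For claim (a) I would expand $K_{\mu,\nu}$ in the matrix picture above. Each $c_P$ (resp.\ $d_P$) is the orbit vector supported on the basis tensors $e_a\otimes e_b$ (resp.\ $f_i\otimes f_j$) with $\text{part}(0,a,b)=P$ (resp.\ $\text{part}(0,i,j)=P$), so $\bigotimes_{j}c_j\otimes\bigotimes_{l}d_l$, viewed as a matrix, is supported on those word pairs whose coordinatewise partitions are the ones prescribed by the tuple $(c_1,\dots,c_{n_2},d_1,\dots,d_{n_3})$. Hence, for a fixed target entry $(x,y)$, the coordinatewise values $(x_2(j),y_2(j))$ and $(x_3(l),y_3(l))$ pin down the unique tuple that can contribute there, namely $c_j=c_{\text{part}(0,x_2(j),y_2(j))}$ and $d_l=d_{\text{part}(0,x_3(l),y_3(l))}$; this tuple appears in the defining double sum of $K_{\mu,\nu}$ precisely when its monomials $\prod_j c_j^{*}$ and $\prod_l d_l^{*}$ equal $\mu$ and $\nu$, i.e.\ when $(\mu,\nu)=(\mu_{x,y},\nu_{x,y})$, and a short computation of the coefficient it contributes — here the normalisation of the orbit vectors $c_P,d_P$ matters — yields the value $1$.

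The part I expect to be the main obstacle is the bookkeeping inside claim (a): confirming that exactly one summand of the double sum reaches a given matrix position — with no overcounting and no cancellation — and that the surviving scalar is precisely $1$ and not a product of reciprocal orbit sizes. This relies on the bijection between $\Pi_i$ and the $S_i$-orbits on $[i]^3$ afforded by $\text{part}$, on $M_2$ and $M_3$ being the associated orbit bases, and on an exact reading of how the monomial conditions $c_1^{*}\cdots c_{n_2}^{*}=\mu$ and $d_1^{*}\cdots d_{n_3}^{*}=\nu$ select the admissible tuples out of the dual bases $M_2^{*},M_3^{*}$.
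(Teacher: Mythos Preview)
Your approach is correct and is exactly the entrywise verification that underlies Lemma~1 of \cite{litjens17}, which is all the paper invokes. Splitting into claims (a) and (b) is the right structure, and (b) is indeed immediate from the definition of $\kappa$ together with the identification of $\Pi_i$ with the $S_i$-orbits on $[i]^3$.

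The one place that deserves more than a hand-wave is the point you already flag: the normalisation in claim~(a). As written in the text, $c_P$ and $d_P$ are defined as \emph{averages} over their orbits; with that convention the single tuple $(c_1,\dots,c_{n_2},d_1,\dots,d_{n_3})$ that reaches position $(x,y)$ contributes $\prod_{l=1}^{n_3}|O_l|^{-1}$ (with $O_l$ the $S_2$-orbit of the pair $(x_3(l),y_3(l))$), not $1$, and the lemma would be off by a power of~$2$. The identity is correct only when $c_P,d_P$ are taken as \emph{sums}, and the appendix confirms that this is the intended normalisation (for example $A_2(1)\otimes A_2(2)=2d_{12,3}^{*}$ forces $d_{12,3}=f_1\otimes f_2+f_1\otimes f_3$, not half of it). With the sum convention each $c_P,d_P$ is a $0/1$ vector, exactly one summand hits $(x,y)$, and its coefficient is indeed $1$, so your ``short computation'' goes through --- but you should state this explicitly rather than leave it to the reader to reconcile with the text's definition.
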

\begin{proof}
This follows directly from Lemma 1 of \cite{litjens17}.
\end{proof}

The lemma implies that it suffices to compute the expressions $u_{\pmb{\sigma}}^TK_{\mu,\nu}u_{\pmb{\tau}}$. Thereto, with respect to $\pmb{\sigma} = (\sigma_1,\sigma_2,\sigma_3)$ and $\pmb{\tau} = (\tau_1,\tau_2,\tau_3)$ and the matrices in (\ref{align:atjes}), we define the following polynomial
\[
p_{\pmb{\sigma},\pmb{\tau}} \coloneqq \prod_{j=1}^{3}\sum_{\substack{\sigma_j' \sim \sigma_j\\
\tau_j' \sim \tau_j}}\sum_{c_j, c_j' \in C_{\lambda_j}}\text{sgn}(c_jc_j')\prod_{y \in Y(\lambda_j)}A_j(\tau_j'c_j(y)) \otimes A_j(\sigma_j'c_j'(y)).
\]
Then $p_{\pmb{\sigma},\pmb{\tau}}$ is a polynomial of degree $n_2+n_3$ on $(\R^{[2]} \otimes \R^{[2]}) \otimes (\R^{[3]} \otimes \R^{[3]})^{S_2}$ and can be computed in terms of the $A_j(l) \otimes A_j(l)$ in polynomial (in $n_2$ and $n_3$) time (see Appendix $2$ of \cite{litjens17}). In view of Lemma $2$ of \cite{litjens17} we have
\[
\sum_{(\mu,\nu) \in Q_2 \times Q_3}(u_{\pmb{\sigma}}^TK_{\mu,\nu}u_{\pmb{\tau}})\mu\nu = p_{\pmb{\sigma},\pmb{\tau}}. 
\]
Hence we are faced with expressing the polynomials $p_{\pmb{\sigma},\pmb{\tau}}$ as linear combinations of the $\mu\nu \in Q_2Q_3$. In order to do so, we write the expressions $A_j(l) \otimes A_j(m)$ as linear functions in the bases $M_2^*$ and $M_3^*$, for all possible combinations of $j,l$ and $m$. The equations may be found in the appendix (Section \ref{section:appendix}).

\subsection{The empty code}

This section deals with the case that $D$ is the empty code. Since it is highly similar to the previous section, we omit some of the details. In the last part of this section it is explained how the empty code is added. For $w \in \Omega$, we define the $[2]^{n_2}[3]^{n_3} \times [2]^{n_2}[3]^{n_3}$ matrix $M_w$ by 
\[
(M_{w})_{x,y} \coloneqq \begin{cases} 1 \hspace{1mm}\text{ if } \{x,y\} \in w\\
0 \hspace{1mm}\text{ otherwise}
\end{cases}
\]
Consider again the representative set given in Proposition \ref{proposition:reprsetleeg}. Given $\mathbf{n} \in \mathbf{N_0}$ and $\pmb{\lambda} \vdash \mathbf{n}$, let $U_{\pmb{\lambda}}$ be the corresponding matrix. As before, the blocks $U_{\pmb{\lambda}}^TM_wU_{\pmb{\lambda}}$ are computed. Only the orbit corresponding to the empty set, the orbit corresponding to the singletons and the orbits of pairs of distinct words are taken into account.\\
\indent The number of orbits representing pairs of words equals the number of ordered partitions of the possible distances in at most two parts. This gives a number of orbits that is polynomial in $n_2$ and $n_3$. From Section \ref{subsection:sizezero} it is furthermore clear that $|\mathbf{N_{0}}| = (n_2+1)(n_3+1)$ and that for each $\mathbf{n} \in \mathbf{N_0}$, there is only one $\pmb{\lambda} = (\lambda_1,...,\lambda_4)$ that partitions $\mathbf{n}$ if all $\lambda_i$ are of height at most $1$. From this it follows that the cardinality of $Z_{\pmb{\lambda}}$ is one for any such $\pmb{\lambda}$, resolving the issue that only a polynomial number of blocks, that are of polynomial size, needs to be considered. We turn to the computation of the coefficients.\\
\indent With notation as in the previous section, let $\widetilde{\Pi} = \{\{123\},\{12,3\}\} \subset \Pi_2$. The sets 
\[
\widetilde{M_2} = \{c_{P} \mid P \in \widetilde{\Pi}\} \text{ and } \widetilde{M_3} = \{d_P \mid P \in \widetilde{\Pi}\}
\] 
form orthogonal bases for $(\R^{[2]} \otimes \R^{[2]})^{S_2}$ and $(\R^{[3]} \otimes \R^{[3]})^{S_3}$ respectively. Let $\widetilde{M_i}^*$ denote the dual basis of $\widetilde{M_i}$, for $i=2,3$. Similar to the previous section, we are ultimately led to the problem of expressing the tensors $B_j(1) \otimes B_j(1)$ (see (\ref{align:b1234})) as linear functions in the bases $\widetilde{M_2}^*$ and $\widetilde{M_3}^*$, for $1 \leq j \leq 4$. The equations are found in the appendix (Section \ref{section:appendix}).\\
\indent Lastly, the empty code is added. As mentioned at the end of Section \ref{subsection:sizezero}, we create an extra row and column corresponding to the vector $e_{\emptyset}$ to the matrix indexed by the partition $\pmb{\lambda} = ((n_2),(),(n_3),())$. The upper left coefficient is equal to $e_{\emptyset}^TM_D(y)e_{\emptyset} = y(\emptyset) = 1$, by (i) of (\ref{align:upperbound}). For $\pmb{\lambda} = ((n_2),(),(n_3),())$, the cardinality of $Z_{\pmb{\lambda}}$ is one, hence there is only one more coefficient to compute. Let $\pmb{\sigma}$ be the unique element in $Z_{\pmb{\lambda}}$, then $v_{\pmb{\sigma}} = \sum_{u \in [2]^{n_2}[3]^{n_3}}e_{u}$ and we compute
\[
e_{\emptyset}^TM_D(y)v_{\pmb{\sigma}} = \sum_{u \in [2]^{n_2}[3]^{n_3}}y(\{u\}) = 2^{n_2}3^{n_3}y(w),
\]
where $w$ is the orbit corresponding to singletons of words.

\section{Table}

The following table shows the improvements that were found on the known upper bounds of $N(n_2,n_3,d)$. In total, $135$ new bounds were obtained. The $131$ unmarked bounds are directly from the semidefinite program. The bound on $(n_2,n_3,d) = (4,3,3)$ is marked with $_1$ and was found using the optimization problem (\ref{align:upperbound}) for quadruples of words ($k=4$). Although the computations for this case are not included in the article, we included the result in the table. The bound on  $(n_2,n_3,d) = (2,12,8)$ is marked with $_2$ and follows from the general inequality $N(n_2+1,n_3,d) \leq 2N(n_2,n_3,d)$ together with $N(1,12,8) \leq 67$. A. Brouwer observed that actually two more new upper bounds follow from this inequality. Namely, $N(5,3,3) \leq 2N(4,3,3) \leq 60$ and $N(5,9,4) \leq 2N(4,9,4) \leq 9180$. These bounds are marked with $_2$ as well.
\vspace{-3mm}
\begin{center}
\small
\captionof{table}{New upper bounds on $N(n_2,n_3,d)$}\label{table:bounds}
\begin{tabular}[!htb]{ >{\raggedleft\arraybackslash}p{0.2cm} >{\raggedleft\arraybackslash}p{0.2cm}  >{\raggedleft\arraybackslash}p{0.2cm}  >{\raggedleft\arraybackslash}p{1.1cm}  >{\raggedleft\arraybackslash}p{1.2cm} >{\raggedleft\arraybackslash}p{1.8cm}}
\hline
            \rule{0pt}{3ex}$n_2$ & $n_3$ & $d$ & \multicolumn{1}{>{\centering\arraybackslash}p{1.1cm}}{Best lower bound known} &  \multicolumn{1}{>{\centering\arraybackslash}p{1.2cm}}{\textbf{New upper bound}} & \multicolumn{1}{>{\centering\arraybackslash}p{1.8cm}}{Best upper bound previously known} \\ \hline 
\rule{0pt}{3ex}2 & 5 & 3 & 52 & 65 & 66\\
3 & 5 & 3 & 99 & 125 & 126\\
4 & 3 & 3 & 28 & $30_{1}$ & 33\\
4 & 5 & 3 & 186 & 238 & 243\\
4 & 8 & 3 & 3888 & 4764 & 4767\\
5 & 3 & 3 & 54 & $60_2$ & 65\\
5 & 4 & 3 & 144 & 165 &167\\
6 & 3 & 3 & 108 & 118 & 123\\
6 & 4 & 3 & 288 & 317 & 322\\
6 & 5 & 3 & 672 & 855 & 863\\
7 & 2 & 3 & 72 & 83 & 85\\
7 & 3 & 3 & 192 & 225 & 230\\
7 & 4 & 3 & 576 & 604 & 609\\
8 & 1 & 3 & 50 & 59 & 60\\
8 & 2 & 3 & 144 & 154 & 160\\
8 & 3 & 3 & 384 & 414 & 417\\
8 & 5 & 3 & 2560 & 3087 & 3110\\
9 & 1 & 3 & 96 & 108 & 109\\
9 & 2 & 3 & 288 & 292 & 293\\
9 & 3 & 3 & 768 & 796 & 806\\
9 & 4 & 3 & 1728 & 2130 & 2131\\ 
10 & 1 & 3 & 192 & 212 & 213\\
10 & 2 & 3 & 512 & 552 & 556\\
10 & 3 & 3 & 1152 & 1492 & 1536\\
10 & 4 & 3 & 3280 & 4081 & 4147\\
11 & 3 & 3 & 2304 & 2890 & 2910\\
\rule{0pt}{0ex}13 & 1 & 3 & 1120 & 1360 & 1365\\
\hline
\rule{0pt}{2ex}1 & 12 & 4 & 8019 & 13531 & 13678\\
1 & 13 & 4 & 16767 & 37714 & 38540\\
2 & 6 & 4 & 51 & 61 & 66\\
2 & 10 & 4 & 1944 & 3371 & 3498\\
2 & 11 & 4 & 5589 & 9450 & 9777\\
3 & 5 & 4 & 36 & 43 & 44\\
3 & 6 & 4 & 92 & 117 & 124\\
3 & 10 & 4 & 3726 & 6581 & 6791\\
3 & 11 & 4 & 10692 & 18039 & 19554\\
4 & 5 & 4 & 62 & 83 & 86\\
4 & 6 & 4 & 158 & 228 & 242\\
4 & 9 & 4 & 2484 & 4590 & 4752\\
\end{tabular}
\hspace{5mm}
\begin{tabular}[!htb]{ >{\raggedleft\arraybackslash}p{0.2cm}  >{\raggedleft\arraybackslash}p{0.2cm}  >{\raggedleft\arraybackslash}p{0.2cm} >{\raggedleft\arraybackslash}p{1.1cm}  >{\raggedleft\arraybackslash}p{1.2cm} >{\raggedleft\arraybackslash}p{1.8cm}}
\hline
            \rule{0pt}{3ex}$n_2$ & $n_3$ & $d$ & \multicolumn{1}{>{\centering\arraybackslash}p{1.1cm}}{Best lower bound known} &  \multicolumn{1}{>{\centering\arraybackslash}p{1.2cm}}{\textbf{New upper bound}} & \multicolumn{1}{>{\centering\arraybackslash}p{1.8cm}}{Best upper bound previously known} \\ \hline
\rule{0pt}{3ex}5 & 4 & 4 & 50 & 59 & 60\\
5 & 5 & 4 & 114 & 160 & 167\\
5 & 6 & 4 & 288 & 436 & 454\\
6 & 4 & 4 & 96 & 114 & 120\\
6 & 5 & 4 & 216 & 308 & 319\\
6 & 6 & 4 & 576 & 825 & 863\\
7 & 4 & 4 & 192 & 220 & 230\\
7 & 5 & 4 & 408 & 585 & 612\\
7 & 6 & 4 & 1152 & 1576 & 1612\\
8 & 2 & 4 & 50 & 59 & 60\\
8 & 3 & 4 & 128 & 153 & 160\\
8 & 4 & 4 & 384 & 407 & 417\\
8 & 5 & 4 & 768 & 1103 & 1120\\
8 & 6 & 4 & 2304 & 3027 & 3224\\
9 & 2 & 4 &  96 & 108 & 109\\
9 & 3 & 4 & 256 & 288 & 293\\
9 & 4 & 4 & 548 & 771 & 782\\
9 & 5 & 4 & 1536 & 2105 & 2199\\
10 & 2 & 4 & 192 & 212 & 213\\
10 & 3 & 4 & 420 & 548 & 556\\
10 & 4 & 4 & 1050 & 1480 & 1533\\
\rule{0pt}{0ex}11 & 3 & 4 & 784 & 1032 & 1060\\
\hline
\rule{0pt}{2ex}1 & 11 & 5 & 729 & 1138 & 1145\\
1 & 12 & 5 & 1458 & 2927 & 2984\\
1 & 13 & 5 & 4374 & 7598 & 7630\\
2 & 10 & 5 & 729 & 849 & 867\\
2 & 11 & 5 & 972 & 2105 & 2157\\
2 & 12 & 5 & 2916 & 5512 & 5636\\
3 & 9 & 5 & 486 & 601 & 633\\
3 & 10 & 5 & 729 & 1519 & 1567\\
3 & 11 & 5 & 1944 & 3964 & 4122\\
4 & 8 & 5 & 324 & 420 & 432\\
4 & 9 & 5 & 729 & 1099 & 1153\\
4 & 10 & 5 & 1458 & 2801 & 2921\\
5 & 8 & 5 & 486 & 791 & 850\\
5 & 9 & 5 & 1458 & 2000 & 2098\\
6 & 7 & 5 & 378 & 563 & 576\\
6 & 8 & 5 & 972 & 1437 & 1481\\
7 & 6 & 5 & 255 & 407 & 432\\
\end{tabular}
\end{center} 

\begin{center}
\small
\begin{minipage}[t]{0.45\linewidth}
\begin{tabular}[t]{ >{\raggedleft\arraybackslash}p{0.2cm}  >{\raggedleft\arraybackslash}p{0.2cm}  >{\raggedleft\arraybackslash}p{0.2cm}  >{\raggedleft\arraybackslash}p{1.1cm}  >{\raggedleft\arraybackslash}p{1.2cm} >{\raggedleft\arraybackslash}p{1.8cm}}

\hline
            \rule{0pt}{3ex}$n_2$ & $n_3$ & $d$ & \multicolumn{1}{>{\centering\arraybackslash}p{1.1cm}}{Best lower bound known} &  \multicolumn{1}{>{\centering\arraybackslash}p{1.2cm}}{\textbf{New upper bound}} & \multicolumn{1}{>{\centering\arraybackslash}p{1.8cm}}{Best upper bound previously known} \\ \hline 
\rule{0pt}{3ex}7 & 7 & 5 & 648 & 1047 & 1089\\
8 & 3 & 5 & 34 & 44 & 48\\
8 & 6 & 5 & 453 & 755 & 806\\
9 & 2 & 5 & 26 & 31 & 32\\
9 & 3 & 5 & 64 & 85 & 91\\
9 & 4 & 5 & 136 & 216 & 224\\
9 & 5 & 5 & 318 & 534 & 576\\
10 & 2 & 5 & 48 & 61 & 64\\
10 & 3 & 5 & 128 & 158 & 170\\
10 & 4 & 5 & 234 & 390 & 427\\
11 & 1 & 5 & 38 & 43 & 48\\
11 & 2 & 5 & 96 & 115 & 121\\
11 & 3 & 5 & 192 & 292 & 316\\
12 & 1 & 5 & 64 & 83 & 86\\
12 & 2 & 5 & 192 & 213 & 236\\
\rule{0pt}{0ex}13 & 1 & 5 & 128 & 156 & 170\\ 
\hline
\rule{0pt}{2ex}1 & 12 & 6 & 729 & 1073 & 1145\\
1 & 13 & 6 & 1458 & 2657 & 2868\\
2 & 11 & 6 & 729 & 803 & 867\\
2 & 12 & 6 & 972 & 1935 & 2093\\
3 & 10 & 6 & 486 & 574 & 614\\
3 & 11 & 6 & 729 & 1414 & 1512\\
4 & 10 & 6 & 729 & 1036 & 1133\\
5 & 8 & 6 & 216 & 276 & 288\\
5 & 9 & 6 & 486 & 744 & 829\\
6 & 8 & 6 & 324 & 527 & 576\\
7 & 4 & 6 & 18 & 22 & 24\\
7 & 6 & 6 & 99 & 142 & 144\\
\end{tabular}
\end{minipage}
\hspace{5mm}
\begin{minipage}[t]{0.45\linewidth}
\begin{tabular}[t]{ >{\raggedleft\arraybackslash}p{0.2cm}  >{\raggedleft\arraybackslash}p{0.2cm}  >{\raggedleft\arraybackslash}p{0.2cm}  >{\raggedleft\arraybackslash}p{1.1cm}  >{\raggedleft\arraybackslash}p{1.2cm} >{\raggedleft\arraybackslash}p{1.8cm}}
\hline
            \rule{0pt}{3ex}$n_2$ & $n_3$ & $d$ & \multicolumn{1}{>{\centering\arraybackslash}p{1.1cm}}{Best lower bound known} &  \multicolumn{1}{>{\centering\arraybackslash}p{1.2cm}}{\textbf{New upper bound}} & \multicolumn{1}{>{\centering\arraybackslash}p{1.8cm}}{Best upper bound previously known} \\ \hline 
\rule{0pt}{3ex}7 & 7 & 6 & 216 & 375 & 384\\
8 & 4 & 6 & 32 & 39 & 43\\
8 & 6 & 6 & 168 & 273 & 288\\
9 & 3 & 6 & 26 & 30 & 32\\
9 & 4 & 6 & 56 & 75 & 77\\
10 & 3 & 6 & 44 & 56 & 61\\
10 & 4 & 6 & 88 & 144 & 153\\
11 & 2 & 6 & 32 & 43 & 48\\
11 & 3 & 6 & 88 & 107 & 112\\
\rule{0pt}{0ex}12 & 2 & 6 & 64 & 83 & 87\\ 
\hline
\rule{0pt}{2ex}1 & 13 & 7 & 243 & 591 & 623\\
5 & 9 & 7 & 69 & 174 & 180\\
6 & 6 & 7 & 18 & 23 & 24\\
6 & 7 & 7 & 33 & 53 & 56\\
6 & 8 & 7 & 61 & 130 & 135\\
7 & 6 & 7 & 24 & 41 & 45\\
7 & 7 & 7 & 58 & 99 & 102\\
8 & 5 & 7 & 22 & 31 & 32\\
8 & 6 & 7 & 44 & 74 & 79\\
9 & 4 & 7 & 18 & 23 & 26\\
9 & 5 & 7 & 36 & 53 & 62\\
10 & 4 & 7 & 28 & 41 & 47\\
11 & 3 & 7 & 24 & 31 & 35\\
\rule{0pt}{0ex}13 & 1 & 7 & 16 & 19 & 20\\ 
\hline
\rule{0pt}{2ex}1 & 12 & 8 & 39 & 67 & 72\\
2 & 12 & 8 & 36 & 134$_{2}$ & 139\\
\rule{0pt}{0ex}6 & 8 & 8 & 28 & 44 & 46\\ 
\hline
\rule{0pt}{2ex}1 & 13 & 9 & 30 & 50 & 54
\end{tabular}
\end{minipage}
\end{center}

\section{Appendix}\label{section:appendix}

In this appendix we express all $A_j(l) \otimes A_j(m)$ and $B_k(1) \otimes B_k(1)$ as linear functions in the bases $M_2^*, M_3^*$ and $\widetilde{M_2}^*, \widetilde{M_3}^*$ respectively. This is done by evaluating the tensors at the basis elements of $M_2, M_3$ and $\widetilde{M_2},\widetilde{M_3}$. A partition is denoted by a sequence of its classes. For example, $c_{12,3}^*$ stands for the dual variable corresponding to the partition $\{\{1,2\},\{3\}\}$ of $\{1,2,3\}$. It is found that
\begin{small}
\begin{align*}
&A_1(1) \otimes A_1(1) = c_{123}^* && A_3(1) \otimes A_3(1) = 2(d_{1,23}^* - d_{1,2,3}^*)\\
&A_1(1) \otimes A_1(2) = c_{12,3}^* && B_1(1) \otimes B_1(1) =  2(c_{123}^*+ c_{12,3}^*)\\
&A_1(2) \otimes A_1(1) = c_{13,2}^* && B_2(1) \otimes B_2(1) = 2(c_{123}^*- c_{12,3}^*)\\
&A_1(2) \otimes A_1(2) = c_{1,23}^* && B_3(1) \otimes B_3(1) = 3(d_{123}^* + 2d_{12,3}^*)\\
&A_2(1) \otimes A_2(1) = d_{123}^* && B_4(1) \otimes B_4(1) = 2(d_{123}^* - d_{12,3}^*)\\
&A_2(1) \otimes A_2(2) = 2d_{12,3}^*\\
&A_2(2) \otimes A_2(1) = 2d_{13,2}^*\\
&A_2(2) \otimes A_2(2) = 2(d_{1,23}^*+d_{1,2,3}^*)
\end{align*}
\end{small}
\noindent \hspace{-2mm} \textit{Acknowledgements.} The author would like to thank Lex Schrijver and Sven Polak for useful discussions and SURFsara for the support in using the LISA Compute Cluster. Furthermore, we thank Andries Brouwer for notifying the author about two improved upper bounds that follow from our calculations.

\end{document}